\theoremstyle{plain}
\newtheorem{Lem}{Lemma}[section]
\newtheorem{Cor}[Lem]{Corollary}
\newtheorem{Thm}[Lem]{Theorem}
\theoremstyle{definition} 
\newtheorem{Rk}[Lem]{Remark}
\newtheorem{Def}[Lem]{Definition}}
\DeclareMathOperator*{\holim}{holim}
\DeclareMathOperator*{\holimG}{holim^\mathit{G}}
\DeclareMathOperator*{\colim}{colim}
\DeclareMathOperator{\sptg}{\mathrm{Spt}_\mathit{G}}
\newcommand{\zig}{\addtocounter{Lem}{1}\tag{\theLem}} 
\def\:{\colon}
\begin{document}

\title{A descent spectral sequence for arbitrary $K(n)$-local spectra 
with explicit $E_2$-term}
\author{Daniel G. Davis and Tyler Lawson}
\subjclass[2010]{Primary 55P42, 55T15, 55Q51}
\thanks{The second author was partially supported by NSF
    grant 0805833 and a fellowship from the Sloan Foundation.}
\begin{abstract}
Let $n$ be any positive integer and $p$ any prime. Also, 
let $X$ be any spectrum and let $K(n)$ denote the 
$n$th Morava $K$-theory spectrum. Then we construct a 
descent spectral sequence with abutment $\pi_\ast(L_{K(n)}(X))$ and 
$E_2$-term equal to the continuous cohomology of $G_n$, 
the extended Morava stabilizer group, with coefficients in a 
certain discrete $G_n$-module that is built from various 
homotopy 
fixed point spectra 
of the Morava module of $X$. This spectral sequence 
can be contrasted with the $K(n)$-local $E_n$-Adams spectral 
sequence for $\pi_\ast(L_{K(n)}(X))$, whose $E_2$-term is not known 
to always be equal to a continuous cohomology group.
\end{abstract}
\maketitle
 
\section{Introduction}

Given an integer $n \geq 1$ and any prime $p$, 
let $K(n)$ be the $n$th Morava $K$-theory spectrum 
and let $E_n$ be the $n$th Lubin-Tate spectrum, 
with \[\pi_\ast(E_n)=W(\mathbb{F}_{p^n})\llbracket u_1, ..., 
u_{n-1}\rrbracket[u^{\pm 1}],\] where 
$W(\mathbb{F}_{p^n})$ denotes the Witt vectors of the field 
$\mathbb{F}_{p^n}$, each $u_i$ has 
degree zero, and the 
degree of $u$ is $-2$. Also, let 
\[G_n = S_n \rtimes \mathrm{Gal}(\mathbb{F}_{p^n}/\mathbb{F}_p)\] 
be the $n$th extended Morava stabilizer group. 

Given a spectrum $X$, there is the Morava module 
$L_{K(n)}(E_n \wedge X)$ of $X$. Since $G_n$ acts on $E_n$ (by 
\cite{goerss-hopkins-summary}), we can give 
$X$ the trivial action and $E_n \wedge X$ the diagonal action, and hence, 
there is an 
induced $G_n$-action on the Morava module. In fact, 
for any closed subgroup $K$ of 
the profinite group $G_n$, by \cite[Section 9]{davis-continuousaction}, 
$L_{K(n)}(E_n \wedge X)$ is a continuous 
$K$-spectrum and there is the homotopy fixed point 
spectrum \[(L_{K(n)}(E_n \wedge X))^{hK},\] formed with 
respect to the continuous $K$-action. Then, in this paper, we 
obtain the following result.

\begin{Thm}\label{thm:applied}
Let $X$ be any spectrum. There is a conditionally 
convergent descent spectral sequence $E_r^{\ast, \ast}$ that has the form
\[E_2^{s,t} = H^s_c\Bigl(G_n; \pi_t\Bigl(\,\colim_{N \vartriangleleft_o G_n} 
(L_{K(n)}(E_n \wedge X))^{hN}\Bigr)\Bigr) \Longrightarrow 
\pi_{t-s}(L_{K(n)}(X)),\] where the $E_2$-term is the 
continuous cohomology of $G_n$ with coefficients in a 
discrete $G_n$-module. 
\end{Thm}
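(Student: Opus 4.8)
The plan is to realize the asserted spectral sequence as the descent (homotopy fixed point) spectral sequence for the continuous action of $G_n$ on the Morava module, after replacing that module by a suitable discrete $G_n$-spectrum. Write $M = L_{K(n)}(E_n \wedge X)$ for the Morava module, regarded as a continuous $G_n$-spectrum by \cite[Section 9]{davis-continuousaction}. The first step is to record the $K(n)$-local descent identification
\[
L_{K(n)}(X) \simeq M^{hG_n},
\]
which expresses the $K(n)$-localization of $X$ as the continuous $G_n$-homotopy fixed points of its Morava module and supplies the desired abutment $\pi_{t-s}(L_{K(n)}(X))$. The obstruction to using the naive homotopy fixed point spectral sequence for $M$ directly is that $\pi_\ast(M)$ is not a discrete $G_n$-module, so its $E_2$-term need not be continuous cohomology with discrete coefficients.

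To remedy this I would introduce the discrete $G_n$-spectrum
\[
M^d = \colim_{N \vartriangleleft_o G_n} M^{hN},
\]
the filtered colimit of the homotopy fixed point spectra of the open normal subgroups, where for $N \subseteq N'$ the structure map $M^{hN'} \to M^{hN}$ is the canonical one and each $M^{hN}$ carries the residual $G_n/N$-action. Because every class in $\pi_t(M^d) = \colim_N \pi_t(M^{hN})$ is fixed by some open subgroup, $\pi_t(M^d)$ is genuinely a discrete $G_n$-module, so the descent spectral sequence of a discrete $G_n$-spectrum applies to $M^d$ and produces a conditionally convergent spectral sequence
\[
E_2^{s,t} = H^s_c\bigl(G_n; \pi_t(M^d)\bigr) \Longrightarrow \pi_{t-s}\bigl((M^d)^{hG_n}\bigr),
\]
in which $H^s_c$ is honest continuous cohomology with discrete coefficients, exactly as in the statement.

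The crux is then the comparison
\[
(M^d)^{hG_n} \simeq M^{hG_n},
\]
which identifies the abutment of the discrete spectral sequence with $L_{K(n)}(X)$ via the display above. I would establish it one open normal subgroup at a time: for each $N' \vartriangleleft_o G_n$ one shows $(M^d)^{hN'} \simeq M^{hN'}$ as $G_n/N'$-spectra, using that the restriction of $M^d$ to $N'$ is a discrete $N'$-spectrum whose continuous $N'$-homotopy fixed points recover $M^{hN'}$, and then applying the residual finite homotopy fixed points $(-)^{h(G_n/N')}$ together with the tower identity $M^{hG_n} \simeq (M^{hN'})^{h(G_n/N')}$. This step is the main obstacle, since it requires reconciling Davis's construction of continuous homotopy fixed points for the non-discrete continuous $G_n$-spectrum $M$ with the homotopy fixed points of the discrete model $M^d$, including the relevant cofinality and colimit-commutation arguments; granting it, combining the three displayed equivalences yields the spectral sequence in precisely the stated form.
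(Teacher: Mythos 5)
Your overall architecture matches the paper's: you use the Davis--Torii equivalence $L_{K(n)}(X) \simeq (L_{K(n)}(E_n \wedge X))^{hG_n}$ for the abutment, you pass to the same discrete $G_n$-spectrum $\colim_{N \vartriangleleft_o G_n}(L_{K(n)}(E_n\wedge X))^{hN}$, and you run the descent spectral sequence for a discrete $G_n$-spectrum to get continuous cohomology with discrete coefficients on the $E_2$-page (note that this last step also needs the finite virtual cohomological dimension of $G_n$, via \cite[Theorem 7.9]{davis-continuousaction}, which you do not mention). However, the step you yourself flag as the crux --- the comparison $(M^d)^{hG_n} \simeq M^{hG_n}$ --- is where your proposal has a genuine gap. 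Your plan is to prove $(M^d)^{hN'} \simeq M^{hN'}$ for each $N' \vartriangleleft_o G_n$ and then assemble these via the iterated fixed point identity $M^{hG_n} \simeq (M^{hN'})^{h(G_n/N')}$. This does not reduce the difficulty: the assertion that ``the continuous $N'$-homotopy fixed points of the discrete spectrum $M^d$ recover $M^{hN'}$'' is, for each $N'$ (including $N' = G_n$), exactly the statement to be proved, and you offer no mechanism for it. Moreover, the iterated homotopy fixed point identity you invoke for the non-discrete continuous $G_n$-spectrum $M$ is itself a delicate point in this framework (it is the subject of separate work of Davis on iterated homotopy fixed points), and even equipping each $M^{hN}$ with a residual $G_n/N$-action so that $\colim_N M^{hN}$ is a well-defined discrete $G_n$-spectrum requires an argument --- the paper's Remark \ref{rk:identify} handles this by \emph{defining} that colimit to be $\holimG_i X_i$.

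The paper's actual mechanism is different and avoids iterated fixed points entirely. One presents the Morava module as $M = \holim_i\,(F_n \wedge M_i \wedge X)_{fG_n}$, a homotopy limit of a tower of \emph{fibrant} discrete $G_n$-spectra. Then $\holimG_i X_i \cong \colim_{N \vartriangleleft_o G_n}(\holim_i X_i)^N$ is itself fibrant in $\mathrm{Spt}_{G_n}$, so its homotopy fixed points are weakly equivalent to its actual fixed points; those actual fixed points are literally $(\holim_i X_i)^{G_n} \cong \holim_i\,(X_i)^{G_n}$, which (again by fibrancy of each $X_i$) is $\holim_i\,(X_i)^{hG_n} = M^{hG_n}$. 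This is Theorem \ref{thm:main}, and it is the single point-set-level observation that makes the comparison go through. Your proposal is missing this idea: without the tower presentation by fibrant discrete $G_n$-spectra and the resulting fibrancy of $\holimG_i X_i$, the comparison between the homotopy fixed points of the discrete model and those of the continuous $G_n$-spectrum is left unestablished.
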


The spectral sequence $E_r^{\ast, \ast}$ is the descent spectral sequence 
for the homotopy fixed point spectrum 
\begin{equation}\label{introsecond}\zig
\Bigl(\,\colim_{N \vartriangleleft_o G_n} 
(L_{K(n)}(E_n \wedge X))^{hN}\Bigr)^{\negthinspace hG_n} \simeq L_{K(n)}(X)
\end{equation} 
of the discrete $G_n$-spectrum $\colim_{N \vartriangleleft_o G_n} 
(L_{K(n)}(E_n \wedge X))^{hN}$ (see Theorem 
\ref{second} for equivalence (\ref{introsecond})).

The spectral sequence of Theorem \ref{thm:applied} 
is in general different from the strongly convergent 
$K(n)$-local $E_n$-Adams spectral sequence 
${\smash{\widetilde{E_r}}^{\negthinspace 
\negthinspace \mspace{-2mu} \ast, \ast}}$ 
for $\pi_\ast(L_{K(n)}(X))$ (for the construction of this 
Adams spectral sequence, see, for example, 
\cite[Appendix A]{devinatz-hopkins-fixedpoint}; for any $X$, 
this Adams spectral sequence 
is isomorphic to the descent spectral sequence for 
$(L_{K(n)}(E_n \wedge X))^{hG_n}$, by \cite[Theorem 1.2]{davis-torii}).

For any spectrum $X$, the $E_2$-term of spectral sequence 
$E_r^{\ast, \ast}$ is given by an explicit continuous cohomology group. 
By contrast, for 
arbitrary $X$ it is not known if, in general, the $E_2$-term of spectral sequence 
${\smash{\widetilde{E_r}}^{\negthinspace 
\negthinspace \mspace{-2mu} \ast, \ast}}$ can be expressed in some way as a continuous 
cohomology group. This is known for 
${\smash{\widetilde{E_r}}^{\negthinspace 
\negthinspace \mspace{-2mu} \ast, \ast}}$ in certain 
cases, which include the following: 
\begin{itemize}
\item[(a)] 
by \cite[Theorem 2,~$\negthinspace$(ii)]{devinatz-hopkins-fixedpoint}, 
if $X$ is a finite spectrum, 
then 
\[{\smash{\widetilde{E_{_{\scriptstyle{2}}}}}^{\negthinspace 
\negthinspace \mspace{-2mu} s,t}} = H^s_c(G_n; \pi_t(E_n \wedge X)),\] where 
$\pi_t(E_n \wedge X)$ is a profinite continuous $\mathbb{Z}_p\llbracket G_n 
\rrbracket$-module; 
\item[(b)] 
by \cite[Theorem 5.1]{level3}, if ${E_n}_\ast(X)$ is a 
flat $\pi_\ast(E_n)$-module then
\[{\smash{\widetilde{E_{_{\scriptstyle{2}}}}}^{\negthinspace 
\negthinspace \mspace{-2mu} s,t}} = H^s_c(G_n; \pi_t(L_{K(n)}(E_n \wedge X))),\] 
where again, in general, the coefficients of the continuous cohomology group 
need not be a discrete $G_n$-module; 
\item[(c)]
another well-known 
case is explained 
in 
\cite[Proposition 7.4]{hopkins-mahowald-sadofsky} (the details of which 
would 
take us too far afield in this introduction); and 
\item[(d)] in the 
subtle case described in \cite[Proposition 6.7]{devinatz-hopkins-fixedpoint}, 
in which the continuous cohomology group has coefficients in a discrete 
$G_n$-module, we show in our next result  
that the two spectral sequences $E_r^{\ast, \ast}$ and 
${\smash{\widetilde{E_r}}^{\negthinspace 
\negthinspace \mspace{-2mu} \ast, \ast}}$ are the same.
\end{itemize}

To state this result, we need some notation. 
Let $E(n)$ be the Johnson-Wilson spectrum with 
$E(n)_\ast = \mathbb{Z}_{(p)}[v_1, ..., v_{n-1}][v_n^{\pm 1}]$, where 
each $v_i$ has degree $2(p^i-1)$, 
and let $I_n$ be the ideal $(p, v_1, ..., v_{n-1})$ in $E(n)_\ast$. As implied 
above, the useful hypothesis in the following result comes from 
\cite[Proposition 6.7]{devinatz-hopkins-fixedpoint}.

\begin{Thm}\label{same}
Let $X$ be a spectrum such that, for each $E(n)$-module spectrum 
$M$, there exists an integer $k$ with $I_n^kM_\ast(X) = 0$. 
Then spectral sequence $E_r^{\ast, \ast}$ 
is isomorphic to the strongly convergent 
$K(n)$-local $E_n$-Adams 
spectral sequence ${\smash{\widetilde{E_r}}^{\negthinspace 
\negthinspace \mspace{-2mu} \ast, \ast}}$ that converges 
to $\pi_\ast(L_{K(n)}(X))$, from the $E_2$-terms onward. 
\end{Thm}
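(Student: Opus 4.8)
The plan is to realize both spectral sequences as descent spectral sequences of $G_n$-spectra and to produce a single comparison map of $G_n$-spectra that induces the desired isomorphism. By equivalence (\ref{introsecond}) and Theorem \ref{second}, the spectral sequence $E_r^{\ast,\ast}$ is the descent spectral sequence of the discrete $G_n$-spectrum
\[
Y := \colim_{N \vartriangleleft_o G_n}(L_{K(n)}(E_n \wedge X))^{hN},
\]
converging to $\pi_\ast(Y^{hG_n}) \cong \pi_\ast(L_{K(n)}(X))$. On the other hand, by \cite[Theorem 1.2]{davis-torii}, the Adams spectral sequence $\widetilde{E}_r^{\ast,\ast}$ is isomorphic to the descent spectral sequence of $(L_{K(n)}(E_n \wedge X))^{hG_n}$. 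Thus it suffices to compare these two descent spectral sequences, and for this I would use the canonical map of $G_n$-spectra
\[
\eta \colon Y = \colim_{N \vartriangleleft_o G_n}(L_{K(n)}(E_n \wedge X))^{hN} \longrightarrow L_{K(n)}(E_n \wedge X)
\]
relating the discretization $Y$ to the continuous $G_n$-spectrum $L_{K(n)}(E_n \wedge X)$, available from the framework of \cite{davis-continuousaction}.

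First I would reduce the theorem to the statement that $\eta$ is a weak equivalence. Since the descent spectral sequence is natural in the $G_n$-spectrum, a weak equivalence $\eta$ induces an isomorphism of descent spectral sequences from the $E_2$-terms onward, compatible with the differentials and with the induced isomorphism of abutments $\pi_\ast(Y^{hG_n}) \to \pi_\ast((L_{K(n)}(E_n \wedge X))^{hG_n})$. In particular, on $E_2$-terms $\eta$ induces $H^s_c(G_n; \pi_t(\eta))$, whose target is the Adams $E_2$-term; under our hypothesis this target is $H^s_c(G_n; \pi_t(L_{K(n)}(E_n \wedge X)))$ with \emph{discrete} coefficients, by \cite[Proposition 6.7]{devinatz-hopkins-fixedpoint}.

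It therefore remains to show that $\eta$ is a weak equivalence, and this is where the hypothesis on $X$ enters. Passing to homotopy and using that homotopy commutes with the filtered colimit, $\pi_\ast(Y) = \colim_N \pi_\ast((L_{K(n)}(E_n \wedge X))^{hN})$ is by construction a discrete $G_n$-module, so the claim is equivalent to showing that $\pi_\ast(\eta)$ is an isomorphism and that $\pi_\ast(L_{K(n)}(E_n \wedge X))$ is itself a discrete $G_n$-module. The key input is that the condition ``$I_n^k M_\ast(X) = 0$ for every $E(n)$-module spectrum $M$'' --- the hypothesis of \cite[Proposition 6.7]{devinatz-hopkins-fixedpoint} --- forces $\pi_\ast(L_{K(n)}(E_n \wedge X))$ to be $I_n$-power torsion of bounded exponent, hence to carry a discrete (smooth) $G_n$-action, so that the canonical comparison between its colimit of $N$-homotopy-fixed points and itself is an equivalence.

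I expect the main obstacle to be exactly this last step: translating the algebraic smallness hypothesis into the spectrum-level assertion that $\eta$ is a weak equivalence. Concretely, one must show that the bounded $I_n$-torsion condition makes $L_{K(n)}(E_n \wedge X)$ behave like a genuine discrete $G_n$-spectrum --- for instance by analyzing the effect of the condition on the completed homology $\pi_\ast(L_{K(n)}(E_n \wedge X))$ and verifying that each homotopy class is fixed by an open subgroup --- and to control the homotopy-fixed-point spectral sequences for the open normal subgroups $N$ uniformly enough to identify their colimit with $\pi_\ast(L_{K(n)}(E_n \wedge X))$. Once the discreteness and the identification of coefficient modules are in hand, the functoriality of the descent construction delivers the isomorphism $E_r^{\ast,\ast} \cong \widetilde{E}_r^{\ast,\ast}$ for all $r \geq 2$.
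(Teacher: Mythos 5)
Your overall strategy coincides with the paper's: identify both spectral sequences as descent spectral sequences and reduce to showing that the canonical map $\eta = \widetilde{\mathbb{I}}_{G_n} \colon \colim_{N \vartriangleleft_o G_n}(L_{K(n)}(E_n \wedge X))^{hN} \to L_{K(n)}(E_n \wedge X)$ induces an isomorphism of discrete $G_n$-modules on homotopy groups. (One preliminary you elide: under the hypothesis, $E_n \wedge X$ is already $K(n)$-local, so $L_{K(n)}(E_n\wedge X) = E_n \wedge X$ and $(E_n \wedge X)^{hN} = (L_{K(n)}(E_n\wedge X))^{hN}$; this is what makes the comparison of the two descent spectral sequences legitimate.) But the step you defer --- proving that $\eta$ is a weak equivalence --- is the entire content of the proof, and the justification you sketch is a non sequitur: the fact that $\pi_\ast(E_n \wedge X)$ carries a discrete $G_n$-action does not by itself imply that $\colim_N Z^{hN} \to Z$ is an equivalence for the continuous $G_n$-spectrum $Z = E_n \wedge X$. (Compare $X = S^0$, where the analogous map is $F_n \to E_n$ and fails to be an equivalence because the source acquires extra homotopy, e.g.\ $\pi_{-1}(F_1) = \mathbb{Q}_p$; knowing something about the target's homotopy does not rule out such phenomena in the source.)

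What is actually needed is a computation of $\pi_\ast$ of the source, and this is where the hypothesis genuinely enters. Concretely: choose a cofinal family $\{U\}$ of open normal subgroups of $G_n$ with uniformly bounded cohomological dimension (available since $G_n$ has finite vcd); take the filtered system of $K(n)$-local $E_n$-Adams spectral sequences $H^s_c(U;\pi_t(E_n\wedge X)) \Rightarrow \pi_{t-s}(L_{K(n)}(E_n^{dhU}\wedge X))$ provided by Devinatz--Hopkins, Proposition 6.7; use the uniform bound, via Mitchell's interchange result, to identify the colimit of this system with a spectral sequence converging to $\pi_\ast\bigl(\colim_U L_{K(n)}(E_n^{dhU}\wedge X)\bigr)$; and observe that the colimit $E_2$-term collapses because $\colim_U H^s_c(U;M) \cong H^s(\{\mathrm{e}\};M)$ for $M$ a discrete $G_n$-module (Serre). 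This yields $\colim_U L_{K(n)}(E_n^{dhU}\wedge X)\simeq E_n\wedge X$, after which one must still check (a) that this equivalence is induced by $\widetilde{\mathbb{I}}_{G_n}$, and (b) that $\bigl(\colim_U L_{K(n)}(E_n^{dhU}\wedge X)\bigr)^{hN}\simeq (E_n\wedge X)^{hN}$ for each $N$ --- which requires a separate $K(n)$-locality argument --- before concluding that $\colim_N (E_n\wedge X)^{hN}\simeq E_n\wedge X$. Your closing paragraph gestures at ``controlling the homotopy fixed point spectral sequences uniformly,'' which is the right instinct, but without the uniform cohomological-dimension bound, the colimit-of-spectral-sequences interchange, the collapse via Serre, and step (b), the argument is not complete.
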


As in \cite[Definition 2.3]{davis-continuousaction}, let 
\begin{equation}\label{F_n}\zig 
F_n = \colim_{N \vartriangleleft_o G_n} E_n^{dhN},\end{equation} where 
$E_n^{dhN}$ is the spectrum constructed by Devinatz and Hopkins in 
\cite{devinatz-hopkins-fixedpoint} 
that behaves like an $N$-homotopy fixed point spectrum of $E_n$ 
with respect to a continuous action of $N$. By construction, 
the spectrum $F_n$ is a 
discrete $G_n$-spectrum. If $X = S^0$, then 
Theorem \ref{thm:applied}, together with 
Remark \ref{perspective}, yields the descent spectral 
sequence 
\[H^s_c(G_n; \pi_t(F_n)) \Longrightarrow \pi_{t-s}(L_{K(n)}(S^0)),\] 
which is a new tool for computing $\pi_\ast(L_{K(n)}(S^0))$ (the existence 
of this spectral sequence is also an immediate consequence 
of \cite[last line of p. 254]{davis-intermediate} and \cite[Theorem 7.9]{davis-continuousaction}).

Given a spectrum $X$, the discrete $G_n$-spectrum 
$\colim_{N \vartriangleleft_o G_n} (L_{K(n)}(E_n \wedge X))^{hN}$, 
which appears in Theorem \ref{thm:applied} and will be referred to here as 
$\mathcal{C}(X)$, is canonically associated to the Morava module 
$L_{K(n)}(E_n \wedge X)$: by Remark \ref{adjoint} and (\ref{identify}), 
$\mathcal{C}(X)$ is the 
output of a certain right adjoint from $G_n$-spectra to discrete 
$G_n$-spectra applied to $L_{K(n)}(E_n \wedge X)$. Also, 
by Remark \ref{rk:identify} and (\ref{ctsmodel}), $\mathcal{C}(X)$ 
can be viewed as the homotopy limit in the category of discrete 
$G_n$-spectra of a diagram whose homotopy limit in the category 
of spectra is $L_{K(n)}(E_n \wedge X)$.

We point out that for any spectrum $X$, in Theorem \ref{thm:applied} 
there is an isomorphism
\[E_2^{s,t} \cong \colim_{N \vartriangleleft_o G_n} H^s\Bigl(G_n/N; 
\pi_t\Bigl((L_{K(n)}(E_n \wedge X))^{hN}\Bigr)\Bigr),\] with each $G_n/N$ a finite 
group, by \cite[Proposition 8]{serre-galoisbook}.

The proof of Theorem \ref{thm:applied} is obtained by combining 
the equivalence 
\[L_{K(n)}(X) \simeq (L_{K(n)}(E_n \wedge X))^{hG_n},\] which 
is valid for any $X$ (by \cite[Theorem 1.1]{davis-torii}), 
with (a version of) the fact that for any profinite group $G$, 
the homotopy fixed points 
of a continuous $G$-spectrum $Z$ can always be obtained by taking the 
homotopy fixed points of a certain discrete $G$-spectrum that is 
closely related to $Z$. The proof of this last fact is given in 
Corollary \ref{corollary}, Theorem \ref{thm:main} gives the version 
that is needed for Theorem \ref{thm:applied}, 
and the remaining details of the proof of Theorem \ref{thm:applied} 
are in Section \ref{sec:details}. Section \ref{samesection} contains 
the proof of Theorem \ref{same}. 

We close this introduction with a comment about notation: 
we often 
use ``$\,E_2^{s,t}\,$" to denote the $E_2$-term of different spectral 
sequences, but ``$\,E_r^{\ast, \ast}\,$" only refers to the spectral sequence 
of Theorem \ref{thm:applied}.

\vspace{.1in}
\par
\noindent
\textbf{Acknowledgements.} The first author thanks Ethan Devinatz for a 
helpful conversation about the hypothesis of 
\cite[Proposition 6.7]{devinatz-hopkins-fixedpoint}, and Paul Goerss for a 
useful discussion about $F_1$. Both authors thank the referee for 
several helpful comments.

\section{Realizing the homotopy fixed points of a continuous $G$-spectrum by 
a discrete $G$-spectrum} 

Let $G$ be any profinite group. 
In this section (and in the following two sections), 
we use the framework of continuous $G$-spectra that is developed 
in \cite{davis-continuousaction} and we refer the reader to this source for 
additional details.
 
Let $\mathrm{Spt}$ be 
the simplicial model category of Bousfield-Friedlander spectra and 
let $\sptg$ denote the simplicial model category of discrete $G$-spectra. 
We use 
$\holim$ and $\holimG$ to denote the homotopy limit, as defined in 
\cite[Definition 18.1.8]{Hirschhorn}, in $\mathrm{Spt}$ and $\sptg$ 
respectively. Let $G\negthinspace-\negthinspace\mathrm{Spt}$ be the category 
of $G$-spectra (the $G$-action is not required to be continuous in any 
sense) and $G$-equivariant maps of spectra: 
$G\negthinspace-\negthinspace\mathrm{Spt}$ is the category of functors 
$\{\ast_G\} \to \mathrm{Spt}$, where $\{\ast_G\}$ is the groupoid associated 
to $G$ (regarded as an abstract group).

The following definition is a special case of a map that is 
defined in \cite[p.~146]{davis-fibrant}.

\begin{Def}\label{i_G}
Given any 
\[\{X_i\}_i = \{\,X_0 \leftarrow X_1 \leftarrow \cdots \leftarrow X_i \leftarrow \cdots\,\}\] 
in $\mathbf{tow}(\mathrm{Spt}_G)$, the category of towers in $\sptg$, 
there is the canonical inclusion 
map 
\[\widetilde{\scriptstyle{\mathbb{I}}}_{_{G}} \: \holimG_i X_i \xrightarrow{\,\cong\,} 
\colim_{N \vartriangleleft_o G} \,(\holim_i X_i)^N 
\xrightarrow{\,\scriptstyle{\mathbb{I}}_{_G}\,} \holim_i X_i\] 
in $G\negthinspace-\negthinspace\mathrm{Spt}$, 
where the isomorphism is by \cite[Theorem 2.3]{davis-fibrant}. 
Notice that the source of the map $\widetilde{\scriptstyle{\mathbb{I}}}_{_{G}}$ 
is a discrete $G$-spectrum.
\end{Def}

Not surprisingly, the map $\widetilde{\scriptstyle{\mathbb{I}}}_{_{G}}$ 
need not be a weak equivalence 
in $\mathrm{Spt}$: an example of this is given in Remark \ref{perspective}. 
Below we show that when a certain condition is satisfied, the map 
$\widetilde{\scriptstyle{\mathbb{I}}}_{_{G}}$, after taking fixed points, 
induces a weak equivalence between the homotopy fixed 
points of the source and target of $\,\widetilde{\scriptstyle{\mathbb{I}}}_{_{G}}$.

\begin{Rk}\label{adjoint} 
If $Y \in \sptg$, then 
$Y \cong \colim_{N \vartriangleleft_o G} Y^N$ and 
this isomorphism makes it 
easy to see that the forgetful functor $\mathbb{U}_G \: \sptg \to 
G\negthinspace-\negthinspace\mathrm{Spt}$
has a right adjoint \[R_G \: G\negthinspace-\negthinspace\mathrm{Spt} 
\to \sptg, \ \ \ Z \mapsto R_G(Z) = \colim_{N \vartriangleleft_o G} Z^N.\] Thus, 
for the $G$-spectrum $\holim_i X_i$ of Definition \ref{i_G}, 
there is the isomorphism 
\[\holimG_i X_i \cong R_G(\holim_i X_i)\] in $\sptg$.
\end{Rk}

Let $(-)_{fG} \: \mathrm{Spt}_G \to \mathrm{Spt}_G$ be a 
fibrant replacement functor, so that given any $Y \in \mathrm{Spt}_G$ 
there is a natural map $Y \xrightarrow{\,\simeq\,} 
Y_{fG}$ that is a trivial cofibration with fibrant target in $\mathrm{Spt}_G$. 
Then we recall from \cite{davis-continuousaction} that (a) the homotopy 
fixed points of $Y$ are defined by
\[Y^{hG} = (Y_{fG})^G;\] (b) the object $\holim_i X_i$, 
where $\{X_i\}_i \in \mathbf{tow}(\mathrm{Spt}_G)$ with each $X_i$ fibrant 
as a spectrum, is a 
{\em continuous $G$-spectrum}; 
and (c) given a continuous $G$-spectrum $\holim_i X_i$, 
\[(\holim_i X_i)^{hG} = \holim_i \,(X_i)^{hG}.\]

The proof of the result below follows a script that was used in 
\cite[p. 2807]{davis-deltadiscrete} in the context of delta-discrete $G$-spectra.
 
\begin{Thm}\label{thm:main}
Given $\{X_i\}_i \in \mathbf{tow}(\mathrm{Spt}_G)$, with each $X_i$ 
fibrant in $\mathrm{Spt}_G$, the map $(\mspace{4mu}\widetilde{\scriptstyle{\mathbb{I}}}_{_{G}})^G$ induces a 
weak equivalence 
\[(\holimG_i X_i)^{hG} \xrightarrow{\,\simeq\,} (\holim_i X_i)^{hG}.\]
\end{Thm}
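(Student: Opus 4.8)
The plan is to show that both homotopy fixed point spectra in the statement are canonically identified with the strict fixed point spectrum $(\holim_i X_i)^G$, so that the asserted weak equivalence simply records the fact that passing to (homotopy) fixed points does not distinguish the continuous $G$-spectrum $\holim_i X_i$ from its discretization $\holimG_i X_i \cong R_G(\holim_i X_i)$. This is the mechanism by which the map $\widetilde{\scriptstyle{\mathbb{I}}}_{_{G}}$, which is not itself a weak equivalence, becomes one after fixed points are taken; in particular the argument cannot proceed by simply applying a homotopy-invariant functor to $\widetilde{\scriptstyle{\mathbb{I}}}_{_{G}}$, and the content lies in handling the two sides separately.

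First I would analyze the source. Since $\holimG_i X_i$ is the homotopy limit in $\sptg$ of the tower $\{X_i\}_i$ of objects that are fibrant in $\sptg$, it is itself fibrant in $\sptg$, because homotopy limits of objectwise fibrant diagrams are fibrant (\cite{Hirschhorn}). Hence no further fibrant replacement is required and $(\holimG_i X_i)^{hG} = (\holimG_i X_i)^G$. Using the isomorphism $\holimG_i X_i \cong R_G(\holim_i X_i)$ together with the adjunction $\mathbb{U}_G \dashv R_G$ of Remark \ref{adjoint}, the strict $G$-fixed points satisfy $(R_G(\holim_i X_i))^G \cong (\holim_i X_i)^G$; that is, the fixed points of a $G$-spectrum coincide with those of its discretization. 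This identifies the source with $(\holim_i X_i)^G$ and shows that, on strict fixed points, $(\widetilde{\scriptstyle{\mathbb{I}}}_{_{G}})^G$ is the identity map of $(\holim_i X_i)^G$, the map $\widetilde{\scriptstyle{\mathbb{I}}}_{_{G}}$ of Definition \ref{i_G} being the inclusion of the discretization.

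Next I would analyze the target. By definition $(\holim_i X_i)^{hG} = \holim_i (X_i)^{hG}$, and since each $X_i$ is fibrant the natural map $(X_i)^G \to (X_i)^{hG} = ((X_i)_{fG})^G$ is a weak equivalence between fibrant spectra, as $(-)^G$ is right Quillen and hence preserves weak equivalences between fibrant objects. Applying $\holim_i$ then gives $(\holim_i X_i)^{hG} \simeq \holim_i (X_i)^G$. Finally, because $(-)^G$ is a limit-preserving (indeed simplicial right adjoint) functor, it commutes with the products and cotensors out of which the homotopy limit is built, so $\holim_i (X_i)^G \cong (\holim_i X_i)^G$. This identifies the target with $(\holim_i X_i)^G$ as well, and assembling the two chains of identifications exhibits the map induced by $(\widetilde{\scriptstyle{\mathbb{I}}}_{_{G}})^G$ as a weak equivalence.

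I expect the main obstacle to be the model-categorical bookkeeping underlying the last two identifications: verifying rigorously that $(-)^G$, and hence $(-)^{hG}$, genuinely commutes with the homotopy limit over the tower, and keeping every object whose fixed points are taken fibrant so that strict and homotopy fixed points agree at each stage. This is precisely the delicate work for which the script of \cite[p.~2807]{davis-deltadiscrete} was designed; following it, one organizes the comparison as a composite of weak equivalences between fibrant spectra, at which point the resulting identifications with $(\holim_i X_i)^G$ assemble into the weak equivalence induced by $(\widetilde{\scriptstyle{\mathbb{I}}}_{_{G}})^G$.
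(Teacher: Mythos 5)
Your proposal is correct and follows essentially the same route as the paper: the paper likewise identifies the source with $(\holim_i X_i)^G$ using fibrancy of $\holimG_i X_i$ in $\sptg$ and the isomorphism $(\mspace{2mu}\colim_{N \vartriangleleft_o G}(\holim_i X_i)^N)^G \cong (\holim_i X_i)^G$ (your adjunction phrasing of the same fact), and identifies the target with $(\holim_i X_i)^G$ by commuting $(-)^G$ past $\holim_i$ and using that each $(X_i)^G \to ((X_i)_{fG})^G$ is a weak equivalence between fibrant spectra. The only cosmetic difference is that the paper writes the source comparison as a weak equivalence $(\holimG_i X_i)^{hG} \xrightarrow{\,\simeq\,} (\holimG_i X_i)^G$ rather than an equality, but this does not affect the argument.
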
 

\begin{proof}
In $\sptg$, since each $X_i$ is fibrant, $\holim^G_i X_i$ is fibrant, 
and hence there exists (in $\sptg$) a weak equivalence 
$(\holim^G_i X_i)_{fG} \xrightarrow{\,\simeq\,} \holim^G_i X_i$. Thus, 
there is the weak equivalence
\[
(\holimG_i X_i)^{hG} \xrightarrow{\,\simeq\,} (\holimG_i X_i)^{G} 
\xrightarrow{\,\cong\,} \Bigl(\,\colim_{N \vartriangleleft_o G} 
\,(\holim_i X_i)^N\Bigr)^{\negthinspace G} 
\xrightarrow[\cong]{(\scriptstyle{\mathbb{I}}_{_G})^G} 
(\holim_i X_i)^G.\] 
Also, there is the weak equivalence
\[(\holim_i X_i)^G \xrightarrow{\,\cong\,} \holim_i \,(X_i)^G 
\xrightarrow{\,\simeq\,} \holim_i \, ((X_i)_{fG})^G = (\holim_i X_i)^{hG}.\] 
Composition of the above weak equivalences gives the desired 
conclusion.
\end{proof}

\begin{Rk}\label{rk:identify}
Let $\{X_i\}_i$ be as in Theorem \ref{thm:main} and notice that     
\[\holimG_i X_i \cong \colim_{N \vartriangleleft_o G} \,\holim_i \,(X_i)^N.\] Since 
$N$ is open in $G$, each $X_i$ is fibrant in $\mathrm{Spt}_N$ 
(by \cite[Lemma 3.1]{davis-iterated}), 
and hence there 
is a weak equivalence
\[\holim_i \,(X_i)^N \xrightarrow{\,\simeq\,} \holim_i \,((X_i)_{fN})^N = 
(\holim_i X_i)^{hN}\] between fibrant objects in $\mathrm{Spt}$, 
from the 
$G/N$-spectrum $\holim_i \,(X_i)^N$ to the spectrum 
$(\holim_i X_i)^{hN}$. Therefore, it is 
natural to make the identification
\begin{equation}\label{identify}\zig
\holimG_i X_i = \colim_{N \vartriangleleft_o G} \,(\holim_i X_i)^{hN}
\end{equation} between 
the ``spectrum" on the right-hand side and the discrete $G$-spectrum on the 
left-hand side. 
To be more precise, the ``spectrum" on the right-hand side of (\ref{identify}) 
needs to be defined 
(since, a priori, the spectrum $((X_i)_{fN})^N$ has no $G/N$-action) and 
we have shown above that the left-hand side of (\ref{identify}) 
can be taken as its definition 
(this situation is discussed in more detail in \cite[p. 260, top of p. 261]{davis-intermediate}). 
\end{Rk}
 
\begin{Cor}
\label{corollary}
Let $\{X_i\}_i$ be any object in 
$\mathbf{tow}(\mathrm{Spt}_G)$. 
If $\,\holim_i X_i$ is a continuous $G$-spectrum, then 
there is a weak equivalence 
between its homotopy fixed points and those of the 
discrete $G$-spectrum $\holim^G_i (X_i)_{fG}\:$
\[(\holim_i X_i)^{hG} \xleftarrow{\,\simeq\,} (\holimG_i (X_i)_{fG})^{hG}.\]
\end{Cor}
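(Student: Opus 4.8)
The plan is to reduce the statement to Theorem \ref{thm:main} by applying that result to the fibrantly-replaced tower, and then to check that replacing $\{X_i\}_i$ by $\{(X_i)_{fG}\}_i$ does not alter the homotopy fixed points. First I would observe that $\{(X_i)_{fG}\}_i$ is again an object of $\mathbf{tow}(\mathrm{Spt}_G)$, but now with each $(X_i)_{fG}$ fibrant in $\mathrm{Spt}_G$ (and hence fibrant as a spectrum). Thus Theorem \ref{thm:main} applies to this tower and produces a weak equivalence
\[(\holimG_i (X_i)_{fG})^{hG} \xrightarrow{\,\simeq\,} (\holim_i (X_i)_{fG})^{hG},\]
which identifies the homotopy fixed points of the discrete $G$-spectrum $\holimG_i (X_i)_{fG}$ with those of the continuous $G$-spectrum $\holim_i (X_i)_{fG}$.

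Next I would compare $\holim_i X_i$ with $\holim_i (X_i)_{fG}$. For each $i$ the natural map $X_i \xrightarrow{\,\simeq\,} (X_i)_{fG}$ is a weak equivalence in $\mathrm{Spt}_G$. Since homotopy fixed points are homotopy invariant on $\mathrm{Spt}_G$ — they are computed as $(-)^{hG} = ((-)_{fG})^G$, where functorial fibrant replacement preserves all weak equivalences and $(-)^G$ preserves weak equivalences between fibrant objects — the induced map $(X_i)^{hG} \xrightarrow{\,\simeq\,} ((X_i)_{fG})^{hG}$ is a weak equivalence for every $i$. Invoking the defining formula $(\holim_i X_i)^{hG} = \holim_i (X_i)^{hG}$ for the homotopy fixed points of a continuous $G$-spectrum (and the analogous formula for the fibrant tower), together with the fact that $\holim_i$ preserves objectwise weak equivalences between towers of fibrant spectra, I obtain a weak equivalence
\[(\holim_i X_i)^{hG} \xrightarrow{\,\simeq\,} (\holim_i (X_i)_{fG})^{hG}.\]

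Finally, I would splice the two weak equivalences together. Both land in the common spectrum $(\holim_i (X_i)_{fG})^{hG}$, so composing the first with a homotopy inverse of the second yields the asserted weak equivalence
\[(\holimG_i (X_i)_{fG})^{hG} \xrightarrow{\,\simeq\,} (\holim_i X_i)^{hG},\]
which is exactly the map $(\holim_i X_i)^{hG} \xleftarrow{\,\simeq\,} (\holimG_i (X_i)_{fG})^{hG}$ of the statement. The one point that requires care, and which I regard as the main obstacle, is the interpretation of $(\holim_i X_i)^{hG}$ when the given tower $\{X_i\}_i$ need not consist of spectrum-fibrant objects: the hypothesis that $\holim_i X_i$ is a continuous $G$-spectrum is precisely what legitimizes using $\holim_i (X_i)^{hG}$ to compute its homotopy fixed points, so that the termwise comparison above is meaningful. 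Everything else is a formal consequence of the homotopy invariance of $(-)^{hG}$ and of $\holim_i$ together with Theorem \ref{thm:main}.
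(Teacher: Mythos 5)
Your proposal is correct and follows essentially the same route as the paper: both apply Theorem \ref{thm:main} to the fibrantly replaced tower $\{(X_i)_{fG}\}_i$ and then identify the result with $(\holim_i X_i)^{hG} = \holim_i \,(X_i)^{hG}$. The paper merely compresses your second comparison step by citing the intermediate equivalences from the proof of Theorem \ref{thm:main} so as to land directly on $\holim_i \,((X_i)_{fG})^G = \holim_i \,(X_i)^{hG}$, but the content is the same.
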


\begin{proof}
By the proof of Theorem \ref{thm:main}, there 
is a weak equivalence 
\[(\holimG_i (X_i)_{fG})^{hG} \xrightarrow{\,\simeq\,} (\holim_i \,(X_i)_{fG})^G 
\xrightarrow{\,\cong\,} \holim_i \,(X_i)^{hG}.\] 
\end{proof}

\section{The proof of Theorem \ref{thm:applied}}\label{sec:details}

We continue to let $G$ be any profinite group. 
The following result is an immediate consequence of 
Corollary \ref{corollary} and \cite[Theorem 7.9]{davis-continuousaction}.

\begin{Thm}\label{dss}
If $G$ has finite virtual cohomological dimension and $\holim_i X_i$ is a 
continuous $G$-spectrum, then there is a conditionally convergent 
descent spectral sequence
\[E_2^{s,t} = H^s_c(G; \pi_t(\holimG_i (X_i)_{fG})) \Longrightarrow \pi_{t-s}((\holim_i X_i)^{hG}),\] 
where the $E_2$-term is the continuous cohomology of $G$ with 
coefficients in the discrete $G$-module $\pi_t(\holim^G_i (X_i)_{fG})$.
\end{Thm}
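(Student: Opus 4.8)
The plan is to reduce the statement to the descent spectral sequence for a single discrete $G$-spectrum, namely $\holimG_i (X_i)_{fG}$, using Corollary \ref{corollary} to handle the abutment. First I would record that $\holimG_i (X_i)_{fG}$ genuinely lives in $\sptg$: by the definition of $\holimG$ it is a homotopy limit formed in the category of discrete $G$-spectra, so it is itself a discrete $G$-spectrum. Consequently each homotopy group $\pi_t(\holimG_i (X_i)_{fG})$ is automatically a discrete $G$-module, which establishes the identification of the coefficients asserted in the final clause of the theorem.

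Next I would invoke \cite[Theorem 7.9]{davis-continuousaction}, which, for any discrete $G$-spectrum $Z$ when $G$ has finite virtual cohomological dimension, supplies a conditionally convergent descent spectral sequence of the form
\[E_2^{s,t} = H^s_c(G; \pi_t(Z)) \Longrightarrow \pi_{t-s}(Z^{hG}).\]
Applying this with $Z = \holimG_i (X_i)_{fG}$ yields a spectral sequence with exactly the desired $E_2$-term, converging to $\pi_{t-s}\bigl((\holimG_i (X_i)_{fG})^{hG}\bigr)$. The finite vcd hypothesis enters the argument only at this point, as the input condition guaranteeing both the existence of the spectral sequence and its conditional convergence.

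Finally I would identify this abutment with the target claimed in the statement. Since $\holim_i X_i$ is a continuous $G$-spectrum, Corollary \ref{corollary} furnishes a weak equivalence
\[(\holimG_i (X_i)_{fG})^{hG} \xrightarrow{\,\simeq\,} (\holim_i X_i)^{hG},\]
which induces isomorphisms on all homotopy groups. Transporting the abutment of the spectral sequence across these isomorphisms replaces $\pi_{t-s}\bigl((\holimG_i (X_i)_{fG})^{hG}\bigr)$ by $\pi_{t-s}\bigl((\holim_i X_i)^{hG}\bigr)$, giving the stated convergence and completing the proof.

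Because every ingredient is already in place, I do not expect a genuine obstacle; the one point deserving care is confirming that the hypotheses of \cite[Theorem 7.9]{davis-continuousaction} are truly met, i.e.\ that $\holimG_i (X_i)_{fG}$ is a discrete $G$-spectrum (so that its homotopy groups are discrete $G$-modules and its homotopy fixed points carry a descent spectral sequence) and that the finite vcd assumption is exactly what secures conditional convergence. This is precisely why the result can be described as an immediate consequence of Corollary \ref{corollary} together with the cited theorem.
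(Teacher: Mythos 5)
Your proposal is correct and matches the paper's argument exactly: the paper likewise obtains Theorem \ref{dss} as an immediate consequence of applying \cite[Theorem 7.9]{davis-continuousaction} to the discrete $G$-spectrum $\holimG_i (X_i)_{fG}$ and then identifying the abutment via the weak equivalence of Corollary \ref{corollary}. No gaps.
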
 

\begin{Rk}
The descent spectral sequence of Theorem \ref{dss} is, in general, different 
from the ``usual" descent spectral sequence (of 
\cite[Theorem 8.8]{davis-continuousaction}) 
for $(\holim_i X_i)^{hG}$. For example, if $\{\pi_t(X_i)\}_i$ satisfies the 
Mittag-Leffler condition for every integer $t$, then the latter 
descent spectral sequence has \[E_2^{s,t} = H^s_\mathrm{cts}(G; 
\lim_i \pi_t(X_i)),\] the cohomology of continuous cochains with coefficients 
in the topological $G$-module $\lim_i \pi_t(X_i)$, by \cite[Definition 2.15, 
Theorem 8.8]{davis-continuousaction}. 
It is not known if, in general, the $E_2$-term 
of this latter spectral sequence can be expressed as 
continuous cohomology (this issue is discussed in \cite{davis-nyjm}), whereas 
in the spectral sequence of Theorem \ref{dss} the $E_2$-term is always 
given by a continuous cohomology group.
\end{Rk}

Now let $X$ be any spectrum. We consider Theorem \ref{dss} in the 
case of the continuous $G_n$-spectrum $L_{K(n)}(E_n \wedge X)$. 
As in \cite[Proposition 4.22]{hovey-strickland-moravaktheory}, let 
\[M_0 \leftarrow M_1 \leftarrow \cdots \leftarrow M_i \leftarrow \cdots \] 
be a tower of generalized Moore spectra (each of which is a finite 
spectrum) such that 
\[L_{K(n)}(X) \simeq \holim_i \,(L_n(X) \wedge M_i)_f,\] where 
$(-)_f \: \mathrm{Spt} \to \mathrm{Spt}$ is a fibrant replacement 
functor. Then, as in \cite[Lemma 9.1]{davis-continuousaction}, 
the Morava module 
$L_{K(n)}(E_n \wedge X)$ is a continuous $G_n$-spectrum 
by the identification
\begin{equation}\label{ctsmodel}\zig
L_{K(n)}(E_n \wedge X) = \holim_i \,(F_n \wedge M_i \wedge X)_{fG_n},
\end{equation} 
where $F_n$ is the discrete $G_n$-spectrum defined in (\ref{F_n}) and 
each $M_i$ has the trivial $G_n$-action.
\par
We have
\begin{align*}
L_{K(n)}(X) & \simeq (L_{K(n)}(E_n \wedge X))^{hG_n} \\ 
& = (\holim_{i} \,(F_n \wedge M_i \wedge X)_{fG_n})^{hG_n} \\
& \simeq \Bigl(\,\colim_{N \vartriangleleft_o G_n} 
(\holim_{i} \,(F_n \wedge M_i \wedge X)_{fG_n})^{hN}\Bigr)^{\negthinspace 
hG_n} \\
& = \Bigl(\,\colim_{N \vartriangleleft_o G_n} 
(L_{K(n)}(E_n \wedge X))^{hN}\Bigr)^{\negthinspace hG_n},
\end{align*} 
where the first equivalence is by \cite[Theorem 1.1]{davis-torii}, the 
two equalities just apply (\ref{ctsmodel}), and 
the second equivalence follows from 
Theorem \ref{thm:main} and (\ref{identify}). Thus, we have 
obtained the following result.

\begin{Thm}\label{second}
For any spectrum $X$, there is an equivalence
\[L_{K(n)}(X) \simeq \Bigl(\,\colim_{N \vartriangleleft_o G_n} 
(L_{K(n)}(E_n \wedge X))^{hN}\Bigr)^{\negthinspace hG_n}.\]
\end{Thm}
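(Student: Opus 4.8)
The plan is to chain together three facts that are already in place in the excerpt, reducing Theorem \ref{second} to an essentially formal computation. First I would invoke \cite[Theorem 1.1]{davis-torii}, which provides the equivalence $L_{K(n)}(X) \simeq (L_{K(n)}(E_n \wedge X))^{hG_n}$ as a continuous $G_n$-homotopy fixed point spectrum. This is the input that identifies the $K(n)$-localization as homotopy fixed points of the Morava module, and it is the only genuinely external ingredient.

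Next I would rewrite the continuous $G_n$-spectrum $L_{K(n)}(E_n \wedge X)$ using the explicit $\holim$-model. By \cite[Proposition 4.22]{hovey-strickland-moravaktheory} there is a tower $\{M_i\}_i$ of generalized Moore spectra with $L_{K(n)}(X) \simeq \holim_i (L_n(X) \wedge M_i)_f$, and then the identification (\ref{ctsmodel}), namely $L_{K(n)}(E_n \wedge X) = \holim_i (F_n \wedge M_i \wedge X)_{fG_n}$ from \cite[Lemma 9.1]{davis-continuousaction}, exhibits the Morava module as $\holim_i X_i$ for the tower $X_i = (F_n \wedge M_i \wedge X)_{fG_n}$ in $\mathbf{tow}(\mathrm{Spt}_{G_n})$, with each $X_i$ fibrant in $\mathrm{Spt}_{G_n}$. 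This puts us exactly in the hypotheses of Theorem \ref{thm:main}.

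The heart of the argument is then Theorem \ref{thm:main} applied to this tower: it yields a weak equivalence $(\holimG_i X_i)^{hG_n} \xrightarrow{\,\simeq\,} (\holim_i X_i)^{hG_n}$, so that the continuous homotopy fixed points of $L_{K(n)}(E_n \wedge X)$ agree with those of the discrete $G_n$-spectrum $\holimG_i X_i$. Using the identification (\ref{identify}), one has $\holimG_i X_i = \colim_{N \vartriangleleft_o G_n} (\holim_i X_i)^{hN} = \colim_{N \vartriangleleft_o G_n} (L_{K(n)}(E_n \wedge X))^{hN}$, where the last equality is again just (\ref{ctsmodel}). Stringing these together produces the displayed chain of equivalences and equalities and gives $L_{K(n)}(X) \simeq \bigl(\colim_{N \vartriangleleft_o G_n} (L_{K(n)}(E_n \wedge X))^{hN}\bigr)^{hG_n}$.

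I expect the main obstacle to be bookkeeping rather than any deep step: one must check that the tower $X_i = (F_n \wedge M_i \wedge X)_{fG_n}$ really satisfies the fibrancy hypothesis of Theorem \ref{thm:main} (this is built into the fibrant-replacement functor $(-)_{fG_n}$), and, more subtly, that the ``spectrum'' $(L_{K(n)}(E_n \wedge X))^{hN}$ appearing on the right is the one defined via (\ref{identify})—so that the colimit over $N$ carries the intended discrete $G_n$-action. Since Remark \ref{rk:identify} has already justified taking the left-hand side of (\ref{identify}) as the definition of that colimit, the argument comes down to transporting the equivalence of Theorem \ref{thm:main} across these definitional identifications, which is precisely what the aligned display in the excerpt records.
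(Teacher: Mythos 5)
Your proposal is correct and follows exactly the paper's own argument: the equivalence $L_{K(n)}(X) \simeq (L_{K(n)}(E_n \wedge X))^{hG_n}$ from \cite[Theorem 1.1]{davis-torii}, the identification (\ref{ctsmodel}) of the Morava module as $\holim_i (F_n \wedge M_i \wedge X)_{fG_n}$, and Theorem \ref{thm:main} together with (\ref{identify}) to replace the continuous homotopy fixed points by those of the discrete $G_n$-spectrum $\colim_{N \vartriangleleft_o G_n} (L_{K(n)}(E_n \wedge X))^{hN}$. Your attention to the fibrancy hypothesis and to the definitional status of the right-hand side of (\ref{identify}) matches the paper's own bookkeeping.
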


\begin{Rk}\label{rk:first}
Suppose that $X$ is a finite spectrum. Then $E_n \wedge X$ is 
$K(n)$-local, and hence, there is the chain 
\begin{align*}
\colim_{N \vartriangleleft_o G_n} 
(L_{K(n)}(& E_n \wedge X))^{hN} = \colim_{N \vartriangleleft_o G_n} 
(E_n \wedge X)^{hN} \simeq  \colim_{N \vartriangleleft_o G_n} (E_n^{hN} \wedge 
X) \\ & \simeq  \bigl(\colim_{N \vartriangleleft_o G_n} E_n^{dhN}\bigr) \wedge X 
= F_n \wedge X\end{align*} of 
equivalent discrete $G_n$-spectra, where the second step applies 
\cite[Theorem 9.9]{davis-continuousaction} and the third step 
uses $E_n^{hN} \simeq E_n^{dhN}$ from 
\cite[Theorem 8.2.1]{behrens-davis-fixedpoints}. Therefore, Theorem 
\ref{second} shows that whenever $X$ is a finite spectrum, 
\[(F_n \wedge X)^{hG_n} \simeq L_{K(n)}(X);\] this special case of 
Theorem \ref{second} was 
previously obtained in \cite[p. 255]{davis-intermediate}.
\end{Rk}

\begin{Rk}\label{perspective}
In Theorem \ref{second}, when $X = S^0$, Remark \ref{rk:first} implies that 
\[\colim_{N \vartriangleleft_o G_n} (L_{K(n)}(E_n \wedge X))^{hN} 
\simeq F_n.\] Thus, Remark 
\ref{rk:identify} shows that the map 
$\widetilde{\scriptstyle{\mathbb{I}}}_{_{G_n}}$ can be identified with 
the canonical $G_n$-equivariant map $F_n \to E_n,$ which 
is not a weak equivalence, by \cite[Lemma 6.7]{davis-continuousaction} (when 
$n=1$, this can be seen explicitly from the fact that $\pi_{-1}(F_1) 
= \mathbb Q_p$). 
Since $\widetilde{\scriptstyle{\mathbb{I}}}_{_{G_n}}$ can be viewed as 
an inclusion, $F_n$ can be regarded as a discrete 
$G_n$-spectrum that is a ``sub-$G_n$-spectrum" of the continuous 
$G_n$-spectrum $E_n$.
\end{Rk}

\begin{Rk}
Let $X$ be any spectrum. The equivalence 
\[L_{K(n)}(X) \simeq (L_{K(n)}(E_n \wedge X))^{hG_n},\] which was 
used above, shows that 
$L_{K(n)}(X)$ is the homotopy fixed points of a continuous $G_n$-spectrum 
that is $K(n)$-local and formed from a tower of discrete $G_n$-spectra. By 
contrast, Theorem \ref{second} says that $L_{K(n)}(X)$ can be realized as 
the homotopy fixed points of a single discrete $G_n$-spectrum that need 
not be $K(n)$-local (for example, $F_n$ is not $K(n)$-local). By 
\cite[Lemma 9.6]{davis-continuousaction}, for each $N 
\vartriangleleft_o G_n$, 
$(L_{K(n)}(E_n \wedge X))^{hN}$ is $K(n)$-local, and hence 
$\colim_{N \vartriangleleft_o G_n} (L_{K(n)}(E_n \wedge X))^{hN}$ is 
$E(n)$-local. Thus, Theorem \ref{second} shows that an arbitrary $K(n)$-local 
spectrum is the homotopy fixed points of a discrete $G_n$-spectrum that 
is always $E(n)$-local, but not necessarily $K(n)$-local.
\end{Rk}

Since $G_n$ has finite virtual cohomological dimension, Theorems 
\ref{dss} and \ref{second} immediately imply that 
there is a conditionally convergent descent spectral sequence 
\[E_2^{s,t} = H^s_c\Bigl(G_n; \pi_t\Bigl(\,\colim_{N \vartriangleleft_o G_n} 
(L_{K(n)}(E_n \wedge X))^{hN}\Bigr)\Bigr) \Longrightarrow 
\pi_{t-s}(L_{K(n)}(X)),\] completing the proof of Theorem \ref{thm:applied}. 

\section{The proof of Theorem \ref{same}}\label{samesection}

Throughout this section, we 
let $X$ be as in Theorem \ref{same}. By \cite[Proposition 6.7]{devinatz-hopkins-fixedpoint}, spectral sequence ${\smash{\widetilde{E_r}}^{\negthinspace 
\negthinspace \mspace{-2mu} \ast, \ast}}$ has the form 
\[{\smash{\widetilde{E_{_{\scriptstyle{2}}}}}^{\negthinspace 
\negthinspace \mspace{-2mu} s, t}} 
\cong H^s_c(G_n; \pi_t(E_n \wedge X)) \Longrightarrow 
\pi_{t-s}(L_{K(n)}(X)),\] where 
\begin{list}{$\overset{\textbf{\LARGE{.}}}{}$}{}
\item
$E_n \wedge X$ is 
$K(n)$-local (by \cite[Lemma 6.11, (i)]{devinatz-hopkins-fixedpoint});
\item
each $\pi_t(E_n \wedge X)$ is a discrete $G_n$-module 
(see \cite[Remark 6.8]{devinatz-hopkins-fixedpoint}); and 
\item
the abutment has the stated form since
\[L_{K(n)}(X \wedge E_n^{dhG_n}) \simeq L_{K(n)}(X \wedge L_{K(n)}(S^0)) 
\simeq L_{K(n)}(X),\] which is obtained 
by applying \cite[Theorem 1, (iii)]{devinatz-hopkins-fixedpoint} 
(for the first equivalence).
\end{list}
Since 
$E_n \wedge X$ is $K(n)$-local, 
\begin{equation}\label{presentation}\zig
E_n \wedge X \simeq \holim_i \,(F_n \wedge M_i \wedge X)_{fG_n}
\end{equation} is a 
continuous $G_n$-spectrum and 
\[(E_n \wedge X)^{hK} = (L_{K(n)}(E_n \wedge X))^{hK},\] for every 
closed subgroup $K$ of $G_n$ (as in the situation considered 
in \cite[Remark 9.3]{davis-continuousaction}). 
By \cite[Theorem 1.2]{davis-torii} and 
\cite[proof of Theorem 3.2.1]{behrens-davis-fixedpoints}, spectral sequence 
${\smash{\widetilde{E_r}}^{\negthinspace 
\negthinspace \mspace{-2mu} \ast, \ast}}$ is isomorphic to the 
descent spectral sequence
\begin{equation}\label{dssx}\zig
E_2^{s,t} = H^s_c(G_n; \pi_t(E_n \wedge X)) \Longrightarrow 
\pi_{t-s}((E_n \wedge X)^{hG_n}),
\end{equation} from the $E_2$-terms onward, where 
this descent 
spectral sequence is a special case of the homotopy spectral sequence 
of \cite[Theorem 8.8]{davis-continuousaction}. (The identification of 
$E_2^{s,t}$ in (\ref{dssx}) as continuous cohomology is due to the 
just-mentioned isomorphism of spectral sequences and the fact 
that \cite[Proposition 6.7]{devinatz-hopkins-fixedpoint} identifies 
${\smash{\widetilde{E_{_{\scriptstyle{2}}}}}^{\negthinspace 
\negthinspace \mspace{-2mu} s, t}}$ as continuous cohomology.) 
Also, recall that 
descent spectral sequence $E_r^{\ast, \ast}$ of Theorem \ref{thm:applied}, 
which is a special case of the homotopy spectral sequence of 
\cite[Theorem 7.9]{davis-continuousaction}, 
has the form 
\begin{equation}\label{3rdDSS}\zig
E_2^{s,t} = 
H^s_c\Bigl(G_n;\pi_t\Bigl(\,\colim_{N \vartriangleleft_o G_n} \,
(E_n \wedge X)^{hN}\Bigr)\Bigr) 
\Longrightarrow \pi_{t-s}((E_n \wedge X)^{hG_n}).\end{equation}

\begin{Rk}\label{canbediscrete}                      
For each $i \geq 0$, the generalized Moore spectrum $M_i$ has the 
property that there exists an ideal $I_i \subset BP_\ast$, such that 
$BP_\ast(M_i) \cong BP_\ast/I_i$ and $I_i$ has the form 
$(\mspace{2mu}p^{j(i)_0}, v_1^{j(i)_1}, ..., v_{n-1}^{j(i)_{n-1}})$, for some $n$-tuple 
$(j(i)_0, j(i)_1, ..., j(i)_{n-1})$ (see \cite[p. 762]{devinatz-comenetz} and 
\cite[Proposition 3.7]{telescopes}). 
Then there are equivalences 
\begin{align*}                                                                                                                                                                                                                                                           
E_n \wedge X & \simeq \colim_i \,(E_n \wedge \Sigma^{-(n+ \Sigma_{r=0}
^{n-1} 2j(i)_r(p-1))} M_i \wedge X) \\
& \simeq 
\colim_i \,(F_n \wedge \Sigma^{-(n+ \Sigma_{r=0}
^{n-1} 2j(i)_r(p-1))} M_i \wedge X)\end{align*} 
of spectra, each of which is $G_n$-equivariant, 
where the first equivalence applies \cite[Lemma 6.11,~$\negthinspace$(ii)]
{devinatz-hopkins-fixedpoint}; the second equivalence follows from the 
fact that the $G_n$-equivariant map 
\[E_n \wedge M_i \xleftarrow{\,\simeq\,} F_n \wedge M_i\] is a 
weak equivalence of spectra, for each $i$ 
(this is due to \cite{devinatz-hopkins-fixedpoint}; for 
an explicit proof, see \cite[Corollary 6.5]{davis-continuousaction}); 
and the third spectrum 
in the above ``short chain" of equivalences is a discrete $G_n$-spectrum 
(since colimits in $\mathrm{Spt}_G$ are formed in $\mathrm{Spt}$). 
Thus, the continuous $G_n$-spectrum 
$E_n \wedge X$ can also be regarded as a discrete $G_n$-spectrum. 
However, our argument below does not use this conclusion.
\end{Rk}

From the above observations (preceding Remark \ref{canbediscrete}), 
it is not hard to see 
that to prove 
Theorem \ref{same}, it suffices to show that 
between 
the coefficient groups in the $E_2$-terms of the second and 
third spectral sequences referred to above (in (\ref{dssx}) and (\ref{3rdDSS}), 
respectively), for each integer $t$, 
there is an isomorphism 
\[\pi_t(\,\mspace{2mu}\widetilde{\scriptstyle{\mathbb{I}}}_{_{G_n}}) \: 
\pi_t\Bigl(\,\colim_{N \vartriangleleft_o G_n} \,
(E_n \wedge X)^{hN}\Bigr) \xrightarrow{\,\cong\,} \pi_t(E_n 
\wedge X)\] 
of discrete $G_n$-modules, where $E_n \wedge X$ is the 
continuous $G_n$-spectrum of (\ref{presentation}).

Since $G_n$ has finite virtual cohomological dimension, there is a 
cofinal collection $\{U\}$ of open normal subgroups of $G_n$ such 
that the family $\{\mathrm{cd}(U)\}_U$ 
of cohomological dimensions are finite and 
uniformly bounded (for example, see \cite[proof of Theorem 7.4]{davis-continuousaction}): thus, there is an integer $r$ such that 
$H^s_c(V; P) = 0$, whenever $V \in \{U\}$, for all $s > r$ and any discrete 
$V$-module $P$.

For each $N \vartriangleleft_o G_n$, there are equivalences
\begin{align*}
(E_n \wedge X)^{hN} & = (\holim_i \,(F_n \wedge M_i \wedge X)_{fG_n})^{hN} \\
& \simeq 
\Bigl(\holim_i \,\Bigl(\mspace{2mu}\colim_{V \in \{U\}} \,(E_n^{dhV} \wedge M_i \wedge X)\Bigr)
_{\negthinspace \negthinspace fG_n}\Bigr)^{\negthinspace hN} \\ 
& \simeq 
\Bigl(\holim_i \,\Bigl(\,\colim_{V \in \{U\}} \,(L_{K(n)}(E_n^{dhV} \wedge X) \wedge M_i)\Bigr)
_{\negthinspace \mspace{-2mu} fG_n}\,\Bigr)^{\negthinspace hN} \\
& \simeq L_{K(n)}\Bigl(\Bigl(\,\colim_{V \in \{U\}} L_{K(n)}(E_n^{dhV} \wedge X)\Bigr)^{\negthinspace hN}\,\Bigr),
\end{align*} where the equality (in the first line above) is by \cite[Definition 9.2]{davis-continuousaction}, the equivalence in the third line applies 
\cite[Lemma 7.2]{hovey-strickland-moravaktheory}, and the last equivalence is justified as in \cite[Theorem 9.7]{davis-continuousaction}, aided by the fact 
that $\colim_{V \in \{U\}} \,L_{K(n)}(E_n^{dhV} \wedge X),$ 
a discrete $G_n$-spectrum, is $E(n)$-local (and hence 
the spectrum \[\Bigl(\,\colim_{V \in \{U\}} L_{K(n)}(E_n^{dhV} \wedge X)\Bigr)^{\negthinspace hN}\] is $E(n)$-local by \cite[Theorem 3.2.1 and 
its proof; proof of Lemma 6.1.5, first 
sentence]{behrens-davis-fixedpoints}).

To simplify our notation, we make the following conventions.

\begin{Def}
If $V$ is a member of the collection $\{U\}$, we set 
\[{E_n^{V,X}} = L_{K(n)}(E_n^{dhV} \wedge X).\] Also, 
``$\colim_{U} E_n^{U,X}$," for example, means exactly 
``$\colim_{V \in \{U\}} E_n^{V,X}$."
\end{Def}

By \cite[Proposition 6.7]{devinatz-hopkins-fixedpoint}, there is a filtered system of $K(n)$-local $E_n$-Adams spectral 
sequences
\[\Bigl\{H^s_c(U; \pi_t(E_n \wedge X)) \Longrightarrow \pi_{t-s}(E_n^{U,X})\Bigr\}_U\,.\] By the uniform bound on $\{\mathrm{cd}(U)\}_U$ and \cite[Proposition 3.3]{mitchell-hypercohomology}, the 
colimit of this diagram of spectral sequences is equal to the spectral sequence
\[E_2^{s,t} = \colim_{U} \,H^s_c(U; \pi_t(E_n \wedge X))\Longrightarrow 
\pi_{t-s}\bigl(\colim_{U} E_n^{U,X}\bigr).\] Since 
$\pi_t(E_n \wedge X)$ is a discrete $G_n$-module and $\lim_U U = \{\mathrm{e}\}$, 
\cite[Proposition 8]{serre-galoisbook} implies that there is an isomorphism
\[\colim_{U} \,H^s_c(U; \pi_t(E_n \wedge X)) \cong 
H^s(\{\mathrm{e}\}; \pi_t(E_n \wedge X)).\] Thus, the above colimit spectral sequence 
collapses, showing that there is an equivalence
\begin{equation}\label{ofinterest}\zig
\colim_{U} L_{K(n)}(E_n^{dhU} 
\wedge X) \simeq E_n \wedge X.\end{equation}

\begin{Rk}\label{desiredinduced}
It will be helpful to be explicit about the equivalence in 
(\ref{ofinterest}). Recall that 
for each closed subgroup $K$ of $G_n$, there are equivalences 
$E_n^{dhK} \simeq E_n^{hK} 
\simeq L_{K(n)}((F_n)^{hK}),$ by \cite[Theorem 8.2.1]{behrens-davis-fixedpoints} 
and \cite[Theorem 9.7]{davis-continuousaction} respectively, and 
by \cite[Lemma 3.1]{davis-iterated}, a fibrant discrete $G_n$-spectrum is 
fibrant in $\mathrm{Spt}_N$ for each $N \vartriangleleft_o G_n$. 
Then the equivalence in 
(\ref{ofinterest}) is given by
\begin{align*}
\colim_U & \, L_{K(n)}(E_n^{dhU} \wedge X) 
\simeq \colim_U L_{K(n)}(E_n^{hU} \wedge X) 
\simeq \colim_U L_{K(n)}\bigl((F_n)^{hU} \wedge X\bigr) \\ 
& = 
\colim_U L_{K(n)}\bigl(((F_n)_{fG_n})^{U} \wedge X\bigr) 
\rightarrow \colim_U L_{K(n)}\Bigl(\bigl(\bigl((F_n)_{fG_n} 
\wedge X\bigr)_{fG_n}\bigr)^U\Bigr) \\ 
& \leftarrow \colim_U L_{K(n)}\Bigl(\bigl((F_n 
\wedge X)_{fG_n}\bigr)^{U}\Bigr) =
\colim_U L_{K(n)}\bigl((F_n 
\wedge X)^{hU}\bigr) \\ 
& \simeq \colim_U \,(L_{K(n)}(E_n \wedge X))^{hU} 
\cong \colim_{N \vartriangleleft_o G_n} (E_n \wedge X)^{hN}
\smash{\xrightarrow{\,\widetilde{\scriptstyle{\mathbb{I}}}_{_{G_n}}\,}} \,E_n \wedge X,
\end{align*} where the first expression in the last row comes from applying 
\cite[Theorem 9.7]{davis-continuousaction}. 
The above zigzag is useful because it shows 
that (\ref{ofinterest}) is induced by the map 
$\widetilde{\scriptstyle{\mathbb{I}}}_{_{G_n}}$. 
\end{Rk} 

To continue, we need to recall some constructions that are useful in 
the theory of discrete $G$-spectra (where $G$ is any profinite group). 

\begin{Def}
Given an abelian group $A$, let $\mathrm{Map}_c(G, A)$ denote the abelian group of continuous functions $G \to A$, where $A$ is equipped with the discrete 
topology. Given any spectrum $Y$, let $\mathrm{Map}_c(G, Y)$ be the 
spectrum with $l$-simplices of the $k$th pointed simplicial set 
$\mathrm{Map}_c(G,Y)_k$ equal to $\mathrm{Map}_c(G, Y_{k,l})$, the set of 
continuous functions from $G$ to the set $Y_{k,l}$ regarded as a discrete space, for 
each $k, l \geq 0$. Also, if $m$ is any non-negative integer, then 
the spectrum $\mathrm{Map}_c(G^{m+1}, Y)$ has the $G$-action determined by 
\[(g \cdot f)(g_1, ..., g_{m+1}) = f(g_1g, g_2, g_3, ..., g_{m+1}), \ \ \ 
f \in \mathrm{Map}_c(G^{m+1}, Y_{k,l}),\] for $k, l \geq 0$ 
and $g, g_1, ..., g_{m+1} \in G.$ 
\end{Def}

Notice that for each $m \geq 0$ and every $N \vartriangleleft_o 
G_n$, there are isomorphisms
\begin{align*}
\pi_\ast\Bigl(\mathrm{Map}_c(G_n^{m+1}, & \colim_{U} L_{K(n)}(E_n^{dhU} 
\wedge X))^N\Bigr) \\  
& \cong 
\textstyle{\prod}_{G_n/N} \,\mathrm{Map}_c\Bigl(G_n^{m}, \pi_\ast\Bigl(\displaystyle{\colim_{U}} \,L_{K(n)}(E_n^{dhU} 
\wedge X)\Bigr)\Bigr) \\ 
& \cong \textstyle{\prod}_{G_n/N} \,\mathrm{Map}_c(G_n^{m}, \pi_\ast(E_n \wedge X)) 
\\ 
& \cong \pi_\ast(\,\textstyle{\prod}_{G_n/N} 
({\underbrace{E_n \wedge E_n \wedge \cdots \wedge E_n}_{(m+1) \ \text{times}}} 
\wedge X)), 
\end{align*} where the first isomorphism is justified as in 
\cite[proof of Theorem 7.4]{davis-continuousaction}, the second 
isomorphism applies (\ref{ofinterest}), and 
the last isomorphism follows as in \cite[proof of 
Proposition 6.7]{devinatz-hopkins-fixedpoint}. Since the spectrum 
$\textstyle{\prod}_{G_n/N} 
(E_n \wedge E_n \wedge \cdots \wedge E_n \wedge X)$ in the last line above 
is $K(n)$-local (by \cite[Lemma 6.11, (i)]{devinatz-hopkins-fixedpoint}), 
so is the spectrum 
\[\mathrm{Map}_c(G_n^{m+1}, \colim_{U} L_{K(n)}(E_n^{dhU} 
\wedge X))^N,\] and hence 
it follows from \cite[Remark 7.13]{davis-continuousaction} and 
the fact that the homotopy limit of a diagram of $K(n)$-local spectra 
is $K(n)$-local that 
the spectrum $(\colim_{U} E_n^{U,X})^{hN}$ is 
$K(n)$-local. Thus, there is an equivalence
\begin{equation}\label{lastidentity}\zig
(E_n \wedge X)^{hN} \simeq \Bigl(\colim_{U} E_n^{U,X}\Bigr)^{\negthinspace 
hN}.\end{equation} 

For each $N \vartriangleleft_o G_n$, there is a weak equivalence 
\[\Bigl(\colim_{U} E_n^{U,X}\Bigr)_{\negthinspace fG_n} \xrightarrow{\,\simeq\,} 
\Bigl(\colim_{U} E_n^{U,X}\Bigr)_{\negthinspace fN}\] between fibrant 
objects in $\mathrm{Spt}_N$ (the source is fibrant in $\mathrm{Spt}_N$ by 
\cite[Lemma 3.1]{davis-iterated}), and hence (\ref{lastidentity}) implies that 
\[(E_n \wedge X)^{hN} \simeq 
\Bigl(\Bigl(\colim_{U} 
E_n^{U,X}\Bigr)_{\negthinspace fN}\,\Bigr)^{\negthinspace N}
\xleftarrow{\, \simeq \,}
\Bigl(\Bigl(\colim_{U} 
E_n^{U,X}\Bigr)_{\negthinspace fG_n}\,\Bigr)^{\negthinspace N}.\]
Therefore, we have
\begin{align*}
\colim_{N \vartriangleleft_o G_n} (E_n & \wedge X)^{hN} \cong 
\colim_{V \in \{U\}} 
(E_n \wedge X)^{hV} 
\simeq \colim_{V \in \{U\}} 
\Bigl(\Bigl(\colim_{U} 
E_n^{U,X}\Bigr)_{\negthinspace fG_n}\,\Bigr)^{\negthinspace V} \\
& \cong \Bigl(\colim_{U} 
E_n^{U,X}\Bigr)_{\negthinspace fG_n} \xleftarrow{\, \simeq \,} \colim_{U} 
E_n^{U,X} \simeq E_n \wedge X,
\end{align*} where the last step uses (\ref{ofinterest}). Recall that 
Remark \ref{desiredinduced} showed that the equivalence in 
(\ref{ofinterest}) is induced by the map 
$\widetilde{\scriptstyle{\mathbb{I}}}_{_{G_n}}$. Thus, applying $\pi_\ast(-)$ 
to the above chain of equivalences completes the 
proof of Theorem \ref{same}.


\begin{thebibliography}{10}

\bibitem{behrens-davis-fixedpoints}
M. Behrens and D. G. Davis,
\newblock The homotopy fixed point spectra of profinite {G}alois extensions,
\newblock {\em Trans. Amer. Math. Soc.} 362(9) (2010), 4983--5042.

\bibitem{davis-nyjm}
D. G. Davis,
\newblock The {$E_2$}-term of the descent spectral sequence for continuous
  {$G$}-spectra,
\newblock {\em New York J. Math.} 12 (2006), 183--191.

\bibitem{davis-continuousaction}
D. G. Davis,
\newblock Homotopy fixed points for {$L_{K(n)}(E_n\wedge X)$} using the
  continuous action,
\newblock {\em J. Pure Appl. Algebra} 206(3) (2006), 322--354.

\bibitem{davis-fibrant}
D. G. Davis,
\newblock Explicit fibrant replacement for discrete {$G$}-spectra,
\newblock {\em Homology, Homotopy Appl.} 10(3) (2008), 137--150.

\bibitem{davis-iterated}
D. G. Davis,
\newblock Iterated homotopy fixed points for the {L}ubin-{T}ate spectrum,
\newblock {\em Topology Appl.} 156(17) (2009), 2881--2898.
\newblock With an appendix by D. G. Davis and B. Wieland.

\bibitem{davis-intermediate}
D. G. Davis,
\newblock Obtaining intermediate rings of a local profinite {G}alois extension
  without localization,
\newblock {\em J. Homotopy Relat. Struct.} 5(1) (2010), 253--268.

\bibitem{davis-deltadiscrete}
D. G. Davis,
\newblock Delta-discrete {$G$}-spectra and iterated homotopy fixed points,
\newblock {\em Algebr. Geom. Topol.} 11(5) (2011), 2775--2814.

\bibitem{davis-torii}
D. G. Davis and T. Torii,
\newblock Every {$K(n)$}-local spectrum is the homotopy fixed points of its
  {M}orava module,
\newblock {\em Proc. Amer. Math. Soc.} 140(3) (2012), 1097--1103.

\bibitem{devinatz-comenetz}
E. S. Devinatz,
\newblock Morava modules and {B}rown-{C}omenetz duality,
\newblock {\em Amer. J. Math.} 119(4) (1997), 741--770.

\bibitem{devinatz-hopkins-fixedpoint}
E. S. Devinatz and M. J. Hopkins,
\newblock Homotopy fixed point spectra for closed subgroups of the {M}orava
  stabilizer groups,
\newblock {\em Topology} 43(1) (2004), 1--47.

\bibitem{goerss-hopkins-summary}
P. G. Goerss and M. J. Hopkins,
\newblock Moduli spaces of commutative ring spectra,
\newblock in {\em Structured ring spectra}, vol. 315 of {\em London Math.
  Soc. Lecture Note Ser.} (Cambridge Univ. Press, Cambridge,
  2004), 151--200.

\bibitem{Hirschhorn}
P. S. Hirschhorn,
\newblock {\em Model categories and their localizations}, 
Mathematical Surveys and Monographs, vol. 99 
(American Mathematical Society, Providence, RI, 2003).

\bibitem{hopkins-mahowald-sadofsky}
M. J. Hopkins, M. Mahowald and H. Sadofsky,
\newblock Constructions of elements in {P}icard groups,
\newblock in {\em Topology and representation theory ({E}vanston, {IL}, 1992)},
  vol. 158 of {\em Contemp. Math.} 
  (Amer. Math. Soc., Providence, RI, 1994), 89--126.

\bibitem{hovey-strickland-moravaktheory}
M. Hovey and N. P. Strickland,
\newblock Morava {$K$}-theories and localisation,
\newblock {\em Mem. Amer. Math. Soc.} 139(666) (1999), viii+100.

\bibitem{level3}
T. Lawson and N. Naumann,
\newblock Commutativity conditions for truncated {B}rown-{P}eterson spectra of
  height 2,
\newblock {\em J. Topol.} 5(1) (2012), 137--168.

\bibitem{telescopes}
M. Mahowald and H. Sadofsky,
\newblock {$v_n$} telescopes and the {A}dams spectral sequence,
\newblock {\em Duke Math. J.} 78(1) (1995), 101--129.

\bibitem{mitchell-hypercohomology}
S. A. Mitchell,
\newblock Hypercohomology spectra and {T}homason's descent theorem,
\newblock in {\em Algebraic {$K$}-theory ({T}oronto, {ON}, 1996)}, vol. 16 of
  {\em Fields Inst. Commun.} (Amer. Math. Soc., Providence, RI,
  1997), 221--277. 

\bibitem{serre-galoisbook}
J.-P. Serre,
\newblock {\em Galois cohomology},
\newblock Springer Monographs in Mathematics (Springer-Verlag, Berlin, 
2002), {E}nglish edition, 
translated from the French by P. Ion and revised by the author.

\end{thebibliography}
\end{document}